\documentclass[preprint,12pt]{elsarticle}
\makeatletter
\def\ps@pprintTitle{%
 \let\@oddhead\@empty
 \let\@evenhead\@empty
 \def\@oddfoot{\centerline{\thepage}}%
 \let\@evenfoot\@oddfoot}
\makeatother


\usepackage{graphicx}

\usepackage{amssymb}
\usepackage{amsthm}
\usepackage[fleqn]{amsmath}
\usepackage{mathtools}
\usepackage{cases}
\usepackage{tikz}
\usetikzlibrary{hobby}




\newtheorem{thm}{Theorem}

\newtheorem{lem}[thm]{Lemma}
\theoremstyle{example}
\newtheorem{exm}{Example}
\newtheorem{prop}[thm]{Proposition}
\theoremstyle{definition}

\theoremstyle{remark}
\newtheorem{rem}{Remark}

\journal{}

\begin{document}

\begin{frontmatter}


\title{On the boundary terms in Hardy's inequalities
for $W^{1,p}$ functions}
\author{Ahmed A. Abdelhakim}
\address{Mathematics Department, Faculty of Science, Assiut University, Assiut 71516 - Egypt\\
Email: ahmed.abdelhakim@aun.edu.eg}
\begin{abstract}
\indent With the help of a radially invariant
vector field, we derive inequalities of
the Hardy kind, with no boundary terms,
for $W^{1,p}$ functions on bounded star
domains. Our results are not obtainable
from the classical inequalities for
$W^{1,p}_{0}$ functions. Unlike in $W^{1,p}_{0}$,
our inequalities admit
maximizers that we describe explicitly.
\end{abstract}
\begin{keyword}
Hardy type inequalities \sep radially invariant vector field \sep Sobolev space \sep star domains \sep boundary terms
\MSC[2010]
Primary 35A23\sep Secondary 26D15\sep 46E35
\end{keyword}
\end{frontmatter}
\section{Introduction}
Let $1\leq p<n$ and let $u \in C^{\infty}_{0}
\left(\Omega\right)$ where
$\Omega$ is a $C^{1}$ bounded domain in $\mathbb{R}^n$. Then
\begin{equation}\label{ineq0}
\left\| \frac{u}{|x|} \right\|_{L^{p}\left(\Omega\right)}
\leq c_{p,n}\| \nabla u \|_{L^{p}\left(\Omega\right)}
\end{equation}
where $ c_{p,n}={p}/\left({n-p}\right).$
The inequality (\ref{ineq0}) is a version
of the wellknown Hardy's inequality.
It is interesting, obviously,  if the origin belongs to $\Omega$.
The constant $c_{p,n}$ in (\ref{ineq0}) is found to be optimal \cite{Balinsky},
yet not attained in the corresponding
Sobolev space $W_{0}^{1,p}\left(\Omega\right)$.
This is a motivation to look for a remainder term.
A substantial improvement
of (\ref{ineq0}) when $p=2$ was obtained by Brezis and Vazquez \cite{Brezis} who proved the inequality
\begin{equation}\label{ineq2}
\left(\int_{\Omega} |\nabla u|^2 dx-
\left(\frac{n-2}{2}\right)^2 \int_{\Omega}
 \frac{u^2}{|x|^2} dx \right)^{\frac{1}{2}}\geq
 \frac{C(q,n)}{|\Omega|^{\frac{1}{q}-\frac{n-2}{2}}}
\left(\int_{\Omega} |u|^q dx \right)^\frac{1}{q},
\end{equation}
$1 < q < n/(n-2)$, with the constant $C(q,n)$ optimal when $\Omega$ is a ball centered at the origin and $q=2$,
but again, never achieved in $H^{1}_{0}\left(\Omega\right)$.
Similar improvements for Hardy's inequalities where a nonnegative correction term is introduced
followed this result (\cite{Barbatis}, \cite{Brezis1,Brezis2},\cite{Gazzola}, \cite{Vazquez2}). But these mainly targeted versions of (\ref{ineq0}) that involve the distance from the boundary as opposed to the distance from the origin or treated the corresponding
$L^p$ cases. Filippas and Tertikas
\cite{Filippas1}  optimized   (\ref{ineq2}), in a certain sense, in terms of correction terms and showed that the best constants in their improvements can not be achieved in $H^{1}_{0}(\Omega).$
Later, N. Ghoussoub, A. Moradifam \cite{Ghoussoub1}
characterized radially
symmetric potentials $V$ and best constants $c(V)$
for the Hardy inequality
\begin{equation*}
\int_{\Omega} |\nabla u|^2 dx-
\left(\frac{n-2}{2}\right)^2 \int_{\Omega}
 \frac{u^2}{|x|^2} dx \geq
 {c(V)}\int_{\Omega} V(|x|) |u|^2 dx,
 \qquad u\in H^{1}_{0}(\Omega).
\end{equation*}
These results were furthered in (\cite{Alvino},\cite{cowan,Cuomo,Devyvera}, \cite{Ghoussoub2}, to name a few).\\
\indent
The unattainability of the aforementioned optimal constants persists regardless of the behaviour  of
$W^{1,p}_{0}$ functions near the boundary.
Notice that (\ref{ineq0}) does not hold
for functions constant on $\Omega$, and therefore not true for all $u\in W^{1,p}\left(\Omega\right).$
Any inequality of the Hardy type for
$u\in W^{1,p}\left(\Omega\right)$ will certainly involve a boundary term. \\
\indent The discussion above invokes a question: Does the inequality (\ref{ineq0})
hold true on a space larger than (or different from) $W_{0}^{1,p}\left(\Omega\right)$ on which the constant $c_{p,n}$, or a bigger one, is optimal and achieved ?\\
\indent Recently S. Machihara et al. \cite{citeballs0}
gave variants of (\ref{ineq0}) on the ball
$B(R)=\left\{x\in \mathbb{R}^n: |x|<R\right\}$, valid
for $H^{1}$ functions, namely
\begin{align}
&\hspace{-0.7 cm}\label{ineq1} \int_{B(R)}\frac{\left|u(x)-u\left (R\frac{x}{|x|}
 \right)\right|^{2}}{|x|^2}
 dx
\leq \left(\frac{2}{n-2}\right)^{2}\int_{B(R)} \left|\frac{x}{|x|}\cdot\nabla u \right|^{2} dx,
\quad n\geq 3,\\
&\hspace{-0.7 cm}
\label{critical1}
\int_{B(R)}\frac{\left|u(x)-u\left (R\frac{x}{|x|} \right)\right|^{2}
}{|x|^2\left| \log{\frac{R}{|x|}} \right|^2}
 dx
\leq 4\int_{B(R)} \left|\frac{x}{|x|}\cdot\nabla u \right|^{2} dx,\quad n=2.
\end{align}
The novelty in inequalities (\ref{ineq1}) and (\ref{critical1}) lies in obtaining inequalities of Hardy type on balls for a function $u\in H^{1}\left(\mathbb{R}^n\right)$ with no boundary terms.
Observe the identity
\begin{equation*}
x\cdot\nabla u\left (R\frac{x}{|x|} \right)=0.
\end{equation*}
This idea is celebrated in
\cite{citeballs2,citeballs1},
\cite{citeballs3,citeballs4,citeballs5,citeballs6}.\\\\
\indent First we show that (\ref{ineq1})
can not be obtained from (\ref{ineq0}) directly,
and neither can (\ref{critical1}) be deduced from
the inequality (\cite{Ioku2016}):
\begin{equation}\label{ineqc}
\int_{\Omega}\frac{|u(x)|^{n}}{|x|^{n}
\left(\log{\frac{M}{|x|}}\right)^{n}}\,dx\leq
\left(\frac{n}{n-1}\right)^{n}
\int_{\Omega} \left|\frac{x}{|x|}\cdot\nabla u\right|^{n} dx,\quad M=\sup_{\Omega}{|x|},
\end{equation}
that holds for all $u\in W^{1,n}_{0}(\Omega)$,
$n\geq 2$. Precisely, if  $v(x):=u(x)-u\left(x/|x|\right)$
then $v$ is not necessarily in $H^{1}\left(B(1)\right)$ whenever
$u\in H^{1}\left(\mathbb{R}^n\right).$ In fact $u\left(x/|x|\right)$ is not always in $L^{2}\left(B(1)\right)$ when $u\in L^{2}\left(\mathbb{R}^n\right)$. See Proposition \ref{cntbtnd}.\\
\indent
Second, with the help of a vector field $f$ satisfying a particular boundary value problem, we extend inequalities (\ref{ineq1}) and
(\ref{critical1}) to bounded $C^{1}$ star-shaped with respect to the origin domains.
This is easy for inequalities like (\ref{ineq0})
obtainable from their analogues on balls:\\
\indent Since $c_{p,n}$ is independent of $\Omega$, then a standard proof \cite{Balinsky} implies
\begin{equation}\label{br}
\int_{B(R)} \frac{|u|^{p}}{|x|^{p}}dx
\leq c^{p}_{p,n}\int_{B(R)} |\nabla u|^{p} dx,\qquad
u\in H^{1}_{0}(B(R)).
\end{equation}
Set $R=\max_{x\in \Omega}{\left|x\right|}$ and simply extend $u\in H^{1}_{0}\left(\Omega\right)$ to
\begin{equation*}
\bar{u}(x):=\left\{
              \begin{array}{ll}
                $u(x)$, & \hbox{$x \in \Omega$;} \\
                $0$, & \hbox{\text{otherwise}.}
              \end{array}
            \right.
\end{equation*}
Then $\bar{u}\in H^{1}_{0}\left(B\left(\max_{x\in \Omega}{\left|x\right|}\right)\right)$
and (\ref{ineq0}) follows from (\ref{br}).
This extension by zero argument (\cite{Ioku2016}, Remark 1.3) does not work for inequalities (\ref{ineq1}) and
(\ref{critical1}). The term $u(R x/|x|)$
that grants $v$ zero ``trace" on
$\partial B(R)$ becomes idle
and the field $x/|x|$ needs to be replaced by
another radially invariant field that will definitely
depend on the domain. As a result, spherical coordinates
used in \cite{citeballs0} can be no
longer helpful. As with
(\ref{ineq1}) and (\ref{critical1}),
by Proposition \ref{cntbtnd}, our inequalities
are unobtainable from
(\ref{ineq0}) when $1\leq p<n$, $n\geq3$ or from
(\ref{ineqc}) in the critical case $p=n\geq 2$.
\\ \indent Finally, sharpness
and equality are discussed
using a method different from Ioku et al.'s in
\cite{citeballs2}, \cite{citeballs1}.
The case $p=1$ missing
in \cite{citeballs1} is also recovered.
We actually identify maximizers for our inequalities
in $W^{1,p}$. We show how exactly the existence of these maximizers depends on both $f$ and the domain.
\section{Main results}
\begin{prop}\label{cntbtnd}
Let $n\geq 2$, $1\leq p<n$, and
assume $\,u\in L^{p}\left(\mathbb{R}^n\right)$.
Then $\,v(x):=u(x)-u\left(x/|x|\right)$
needs not belong to $L^{p}\left(B(1)\right)$.
Moreover, $u\in W^{1,p}\left(\mathbb{R}^n\right)$,
does not guarantee $v\in W^{1,p}\left(B(1)\right)$.
\end{prop}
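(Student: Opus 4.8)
The plan is to read both assertions as statements about the restriction (trace) of $u$ to the unit sphere. Writing $x=r\omega$ with $r=|x|$ and $\omega\in S^{n-1}$, and setting $g:=u|_{S^{n-1}}$, the function $w(x):=u(x/|x|)=g(\omega)$ is constant along rays. Passing to polar coordinates yields the two identities
\[
\int_{B(1)}|w|^{p}\,dx=\frac{1}{n}\int_{S^{n-1}}|g|^{p}\,d\sigma,
\qquad
\int_{B(1)}|\nabla w|^{p}\,dx=\frac{1}{n-p}\int_{S^{n-1}}\left|\nabla_{S^{n-1}}g\right|^{p}\,d\sigma,
\]
where the second uses $|\nabla w(x)|=r^{-1}\,|\nabla_{S^{n-1}}g(\omega)|$ together with $\int_{0}^{1}r^{\,n-1-p}\,dr=(n-p)^{-1}<\infty$, the finiteness being exactly the hypothesis $p<n$. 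Since $u\in L^{p}(B(1))$ and $\nabla u\in L^{p}(B(1))$ under the respective hypotheses, and since $v=u-w$, the inclusions $v\in L^{p}(B(1))$ and $v\in W^{1,p}(B(1))$ are equivalent, respectively, to $g\in L^{p}(S^{n-1})$ and to $g\in W^{1,p}(S^{n-1})$. Both claims thus reduce to producing a function whose trace is too rough.

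For the first assertion I would anchor a pure power singularity at a point $e_{1}\in S^{n-1}$, taking $u(x)=\chi(x)\,|x-e_{1}|^{-\alpha}$ with $\chi$ a smooth cut-off equal to $1$ near $e_{1}$ and compactly supported. Membership $u\in L^{p}(\mathbb{R}^{n})$ holds iff $\alpha p<n$, while on the sphere $|g(\omega)|\approx\theta^{-\alpha}$, with $\theta$ the geodesic distance to $e_{1}$ and $d\sigma\approx\theta^{\,n-2}\,d\theta$, so $g\notin L^{p}(S^{n-1})$ iff $\alpha p\geq n-1$. Any $\alpha\in[(n-1)/p,\,n/p)$—a nonempty range since $n-1<n$—gives $u\in L^{p}(\mathbb{R}^{n})$ with $\int_{S^{n-1}}|g|^{p}\,d\sigma=\infty$, hence $w\notin L^{p}(B(1))$ and therefore $v\notin L^{p}(B(1))$.

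The second assertion is the delicate one, and is where I expect the main difficulty. For $u\in W^{1,p}(\mathbb{R}^{n})$ the trace theorem already forces $g\in L^{p}(S^{n-1})$ (in fact $g\in W^{1-1/p,p}(S^{n-1})$ for $p>1$), so $v\in L^{p}(B(1))$ automatically and the previous mechanism is unavailable: the obstruction must be created at the level of the gradient, exploiting that the trace of a $W^{1,p}$ function need not lie in $W^{1,p}(S^{n-1})$. I would again use a power, now of positive sign, $u(x)=\chi(x)\,|x-e_{1}|^{\gamma}$. Here $|\nabla u|\approx|x-e_{1}|^{\gamma-1}$, so $u\in W^{1,p}(\mathbb{R}^{n})$ iff $\gamma>1-n/p$, whereas $|\nabla_{S^{n-1}}g|\approx\theta^{\gamma-1}$ gives $\nabla_{S^{n-1}}g\notin L^{p}(S^{n-1})$ iff $\gamma\leq1-(n-1)/p$. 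The window $1-n/p<\gamma\leq1-(n-1)/p$ has length $1/p>0$, so one fixes $\gamma$ inside it with $\gamma\neq0$ (ruling out a constant trace). For such $\gamma$ the second identity yields $\nabla w\notin L^{p}(B(1))$.

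To conclude rigorously I would argue by contradiction: if $v\in W^{1,p}(B(1))$ then $w=u-v\in W^{1,p}(B(1))$; but $w$ is smooth off the null set consisting of the ray through $e_{1}$ and the origin, so there its weak gradient coincides with the classical one, whose $L^{p}$ norm is infinite by the computation above—contradiction. The only points demanding care are this passage from the pointwise blow-up of $\nabla w$ to the failure of the weak derivative to be in $L^{p}$, and the convergence of the radial integration in the gradient identity, which is precisely where $p<n$ enters.
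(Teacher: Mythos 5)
Your proposal is correct and takes essentially the same approach as the paper: a compactly supported power singularity anchored at a point $x_0$ of the unit sphere, with the $L^p$ and $W^{1,p}$ memberships computed in polar coordinates and the counterexample supplied by the nonempty window of exponents (your $\gamma$ is exactly the paper's $-\alpha$, where the paper takes $u(x)=\gamma(x)|x-x_0|^{-\alpha}$ with $(n-p-1)/p\leq\alpha<(n-p)/p$). The paper likewise concludes from the divergence of $\int_{B(1)}\left|\nabla\left(u(x/|x|)\right)\right|^{p}dx$ computed via $\nabla\left(u(x/|x|)\right)=r^{-1}\nabla_{\omega}u(\omega)$; your explicit contradiction step at the end merely spells out what the paper leaves implicit.
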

\begin{proof}
Fix $x_0 \in {\mathbb{R}}^n$ with $ |x_0|=1$.  Let $ 0 \leq \gamma \leq 1$ denote a cutoff  function with $ \gamma(x)=1$ for $\frac{3}{4}<|x|<\frac{5}{4}$ and $\gamma=0$ for $|x|<\frac{1}{2}$ and $ |x|>\frac{3}{2}$. Let
\begin{equation*}
u(x):= \frac{\gamma(x)}{|x-x_0|^{\alpha}},\qquad \alpha>0.
\end{equation*}
We have
\begin{equation*}
\int_{\mathbb{R}^{n}}
|u|^p\,dx\lesssim \int_{\frac{1}{2}\leq|x|\leq\frac{3}{2}}
\frac{dx}{|x-x_0|^{p\alpha}}\lesssim \int_{|x|\leq1}
\frac{dx}{|x|^{p\alpha}}.
\end{equation*}
Thus $u\in L^{p}(B(1))$ if $\alpha<\frac{n}{p}$. Whereas, using spherical coordinates,
\begin{equation*}
\int_{B(1)}
\left|u\left(\frac{x}{|x|}\right)\right|^p\,dx
= \int_{\frac{1}{2}\leq|x|\leq 1}
\frac{dx}{|\frac{x}{|x|}-x_0|^{p\alpha}}
\approx \int_{|\omega|=1}
\frac{d\omega}{|\omega-x_{0}|^{p\alpha}}
\end{equation*}
which converges iff $\alpha<\frac{n-1}{p}$. This excludes $v$ from $L^{p}\left(B(1)\right)$
for $\alpha\geq\frac{n-1}{p}$. Also
\begin{eqnarray*}
\int_{\mathbb{R}^{n}} |\nabla u|^p\,dx
&\lesssim&
\int_{
\frac{1}{4}\leq ||x|-1|\leq\frac{1}{2}}
\frac{|\nabla  \gamma|^{p}}{|x-x_0|^{p\alpha}}\,dx+
\int_{\frac{1}{2}\leq|x|\leq\frac{3}{2}}
\frac{|\gamma|^{p}}{|x-x_0|^{p(\alpha+1)}}\,dx\\
&\lesssim& 1+\int_{|x|\leq 1}
\frac{dx}{|x|^{p(\alpha+1)}},
\end{eqnarray*}
whence $u\in W^{1,p}\left(\mathbb{R}^n\right)$
whenever $\alpha<\frac{n-p}{p}$. Meanwhile
\begin{eqnarray*}
\nabla\left(u\left(\frac{x}{|x|}\right)\right)
&=&\frac{1}{|x|}
\left(\nabla u\right)\left(\frac{x}{|x|}\right)-
\frac{x\cdot\left(\nabla u\right)\left(\frac{x}{|x|}\right)}{|x|^3}\,x\\
&=&\frac{1}{|x|}\left(
\left(\nabla u\right)\left(\frac{x}{|x|}\right)-
\frac{x}{|x|}\cdot\left(\nabla u\right)\left(\frac{x}{|x|}\right)
\frac{x}{|x|}\right)\\
&=&\frac{1}{r}\left[
\left(\nabla u\right)\left(\omega\right)
-\left(\hat{r}\cdot\left(\nabla u\right)\left(\omega\right)\right)
\hat{r}\right]
\,=\,\frac{1}{r}\nabla_{\omega}u\left(\omega\right)
\end{eqnarray*}
in spherical coordinates. Consequently
\begin{eqnarray*}
\int_{B(1)}\left|\nabla
u\left(\frac{x}{|x|}\right)\right|^{p}
dx&\approx&
\int_{|\omega|=1}
\left|\nabla_{\omega}u\left(\omega\right)\right|^{p}
d\omega\,=\,
\int_{|\omega|=1}\frac{d\omega}{|\omega-x_0|^{p(\alpha+1)}}.
\end{eqnarray*}
Therefore $v\notin W^{1,p}(B(1))$ for any
$\alpha\geq {(n-p-1)}/{p}$.
\end{proof}
The upcoming lemma provides
an implemental estimate of weak derivatives.
\begin{lem}\label{recover1}(\cite{Balinsky})
Let $1\leq p \leq \infty$ and $g\in W^{1,p}\left(\Omega\rightarrow \mathbb{R}\right)$.
Then
$\,\displaystyle |x\cdot \nabla |g||\leq |x\cdot \nabla g|.$
\end{lem}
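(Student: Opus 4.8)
The plan is to reduce the inequality to the standard description of the weak gradient of $|g|$. Since the map $t \mapsto |t|$ is Lipschitz, its composition with a Sobolev function is again Sobolev, and the chain rule yields an explicit formula for $\nabla|g|$ that can be contracted with the vector $x$. The whole statement then follows from a pointwise comparison, so the real content is establishing the formula $\nabla|g| = \operatorname{sgn}(g)\,\nabla g$ carefully.

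Concretely, I would first regularize the absolute value by setting $F_{\varepsilon}(t) := \sqrt{t^2+\varepsilon^2}-\varepsilon$ for $\varepsilon>0$. Each $F_{\varepsilon}\in C^{1}(\mathbb{R})$ satisfies $0\leq F_{\varepsilon}(t)\leq |t|$ and $F_{\varepsilon}(t)\to |t|$ uniformly, while $F_{\varepsilon}'(t)=t/\sqrt{t^2+\varepsilon^2}$ obeys $|F_{\varepsilon}'|\leq 1$. By the chain rule for a $C^{1}$ function of bounded derivative composed with a $W^{1,p}$ function, $F_{\varepsilon}(g)\in W^{1,p}(\Omega)$ with weak gradient $\nabla F_{\varepsilon}(g)=F_{\varepsilon}'(g)\,\nabla g = \tfrac{g}{\sqrt{g^2+\varepsilon^2}}\,\nabla g$.

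Next I would let $\varepsilon\to 0$. Almost everywhere $F_{\varepsilon}'(g)\to \operatorname{sgn}(g)$, with the convention $\operatorname{sgn}(0)=0$, so that $\tfrac{g}{\sqrt{g^2+\varepsilon^2}}\,\nabla g \to \operatorname{sgn}(g)\,\nabla g$ a.e., dominated by $|\nabla g|\in L^{p}$. Together with $F_{\varepsilon}(g)\to |g|$, this identifies $|g|$ as an element of $W^{1,p}(\Omega)$ with $\nabla|g| = \operatorname{sgn}(g)\,\nabla g$ a.e.; in particular $\nabla|g|=0$ a.e. on the set $\{g=0\}$, which is consistent with the general fact that the weak gradient vanishes a.e. on level sets of $g$.

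Finally, contracting with $x$ gives $x\cdot\nabla|g| = \operatorname{sgn}(g)\,(x\cdot\nabla g)$ a.e., and taking absolute values yields $\lvert x\cdot\nabla|g|\rvert = \lvert\operatorname{sgn}(g)\rvert\,\lvert x\cdot\nabla g\rvert \leq \lvert x\cdot\nabla g\rvert$ a.e., since $\lvert\operatorname{sgn}(g)\rvert\leq 1$. I expect the main obstacle to be the limiting step: one must verify that the chain-rule identity survives the passage $\varepsilon\to 0$ and that the gradient genuinely vanishes a.e. where $g=0$; the remaining contraction and the final estimate are then immediate pointwise bookkeeping.
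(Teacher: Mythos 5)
Your proof is correct: the regularization $F_{\varepsilon}(t)=\sqrt{t^2+\varepsilon^2}-\varepsilon$, the chain rule, and dominated convergence yield $\nabla|g|=\operatorname{sgn}(g)\,\nabla g$ a.e., from which the pointwise bound $|x\cdot\nabla|g||\leq|x\cdot\nabla g|$ is immediate. Note that the paper itself offers no proof — it quotes the lemma from the cited reference of Balinsky, Evans and Lewis — and your argument is precisely the standard one given there, so there is nothing to reconcile.
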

\begin{thm}\label{thm1}
Let $n\geq 3$, $1\leq p<n$, and let $u\in W^{1,p}(\Omega)$
where $\Omega$ is a bounded $C^{1}$ domain
star-shaped with respect to the origin in $\mathbb{R}^{n}$. Suppose $f\in C^{1}\left(\bar{\Omega}_0 \rightarrow \mathbb{R}^n \right)\cap
L^{\infty}\left(\Omega\rightarrow \mathbb{R}^n \right)$ solves the boundary value problem
\begin{equation}
\begin{split}\label{fbvp}
\left\{
  \begin{array}{ll}
\vspace{0.2 cm} \left(x\cdot \nabla \right) f(x)=0, & \hbox{$x \in\Omega_{0}$;} \\
    f(x)=x, & \hbox{$x \in\partial\Omega$}
  \end{array}
\right.
\end{split}
\end{equation}
where $\Omega_{0}$ denotes $\Omega\setminus\{0\}$.
Then
\begin{eqnarray}\label{mainineq1}
\int_{\Omega}
\frac{|u-u\circ f|^{p}}{|x|^{p}}\,dx
&\leq& \left(\frac{p}{n-p}\right)^{p} \int_{\Omega}\left|\frac{x}{|x|} \cdot \nabla u \right|^{p} \,dx.
\end{eqnarray}
\end{thm}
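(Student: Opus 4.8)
The plan is to exploit the fact that the boundary condition forces $u\circ f$ to be radially invariant, thereby reducing (\ref{mainineq1}) to a one–dimensional weighted Hardy inequality along each ray emanating from the origin. Since $(x\cdot\nabla)f=0$ on $\Omega_{0}$, the map $f$ is constant along radial segments, and the condition $f=x$ on $\partial\Omega$ pins down that constant: writing $x=r\omega$ with $r=|x|$, $\omega\in S^{n-1}$, and letting $R(\omega)$ be the radius (unique, by star-shapedness) at which the ray $\{t\omega:t>0\}$ meets $\partial\Omega$, one gets $f(r\omega)=R(\omega)\,\omega$ for every $0<r\le R(\omega)$. Thus $u\circ f$ is exactly the radial pull-back of the trace of $u$ on $\partial\Omega$, the analogue of the term $u\bigl(Rx/|x|\bigr)$ in (\ref{ineq1}). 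Setting $v:=u-u\circ f$, the chain rule together with $(x\cdot\nabla)f=0$ gives $x\cdot\nabla(u\circ f)=0$, hence
\begin{equation*}
\frac{x}{|x|}\cdot\nabla v=\frac{x}{|x|}\cdot\nabla u,
\end{equation*}
while $f=x$ on $\partial\Omega$ yields $v=0$ on $\partial\Omega$. Consequently the gradient integral on the right of (\ref{mainineq1}) is unchanged when $u$ is replaced by $v$, and (\ref{mainineq1}) reduces to a Hardy inequality for the function $v$ vanishing on $\partial\Omega$.

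Next I would pass to spherical coordinates, $dx=r^{n-1}\,dr\,d\omega$, and freeze $\omega$. Because $\Omega$ is star-shaped the slice $\{r:0<r<R(\omega)\}$ is an interval, so for a.e.\ $\omega$ the problem is one–dimensional on $(0,R(\omega))$ with the endpoint condition $v(R(\omega)\omega)=0$. To treat the absolute value cleanly I would work with $g:=|v|\ge 0$, observing that the radial form of Lemma \ref{recover1} gives $|\partial_{r}g|=\bigl|\tfrac{x}{|x|}\cdot\nabla|v|\bigr|\le\bigl|\tfrac{x}{|x|}\cdot\nabla v\bigr|=\bigl|\tfrac{x}{|x|}\cdot\nabla u\bigr|$, so any bound in terms of $\partial_{r}g$ transfers immediately to the desired right–hand side.

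The heart of the matter is then the sharp one–dimensional estimate, for $g\ge 0$ with $g(R)=0$, obtained by integrating the left–hand side by parts against $\tfrac{d}{dr}\bigl(\tfrac{r^{n-p}}{n-p}\bigr)=r^{n-p-1}$,
\begin{equation*}
\int_{0}^{R}g^{p}r^{n-p-1}\,dr=\frac{1}{n-p}\Bigl[g^{p}r^{n-p}\Bigr]_{0}^{R}-\frac{p}{n-p}\int_{0}^{R}g^{p-1}(\partial_{r}g)\,r^{n-p}\,dr,
\end{equation*}
where the endpoint term at $R$ vanishes since $g(R)=0$ and the one at $0$ vanishes because $n>p$ forces $g(r)^{p}r^{n-p}\to 0$. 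Bounding $\partial_{r}g$ by $|\partial_{r}g|$ and, for $p>1$, applying H\"older's inequality with the conjugate exponents $p/(p-1)$ and $p$, chosen so the weights split exactly as $r^{n-p-1}$ and $r^{n-1}$, yields
\begin{equation*}
\int_{0}^{R}g^{p}r^{n-p-1}\,dr\le\frac{p}{n-p}\left(\int_{0}^{R}g^{p}r^{n-p-1}\,dr\right)^{\frac{p-1}{p}}\left(\int_{0}^{R}|\partial_{r}g|^{p}r^{n-1}\,dr\right)^{\frac{1}{p}},
\end{equation*}
and dividing by the first factor on the right gives the one–dimensional inequality with constant $(p/(n-p))^{p}$ (the case $p=1$ follows directly from the same integration by parts). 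Integrating over $\omega\in S^{n-1}$ and returning to Cartesian coordinates then recovers (\ref{mainineq1}).

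The step I expect to be the most delicate is the rigorous justification of $x\cdot\nabla(u\circ f)=0$ and of the vanishing endpoint term at the origin, since $u$ is merely $W^{1,p}$ while $f$ is only $C^{1}$ away from $0$ and just $L^{\infty}$ there. I would address this by approximation: first establishing the identity and the inequality for $u\in C^{1}(\bar\Omega)$, where the chain rule and the decay $g(r)^{p}r^{n-p}\to 0$ are transparent, and then passing to general $u\in W^{1,p}(\Omega)$ by density together with Fatou's lemma. Here one uses that $\bigl|\tfrac{x}{|x|}\cdot\nabla u\bigr|\le|\nabla u|\in L^{p}(\Omega)$, so the right–hand side is finite and, through the per-ray estimate, controls $\int_{\Omega}|v|^{p}/|x|^{p}\,dx$; this is precisely what reconciles the statement with Proposition \ref{cntbtnd}, since it is finiteness of the radial gradient term, not of $u\circ f$ on its own, that guarantees integrability of the left–hand side.
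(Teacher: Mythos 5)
Your proof is correct, but it follows a genuinely different route from the paper's. The paper never passes to spherical coordinates: it sets $F=|u-u\circ f|^{p}/|x|^{p}$, writes $F$ through an $n$-dimensional divergence identity, excises a small ball $B(\epsilon)$ to isolate the origin, applies the divergence theorem on $\Omega\setminus B(\epsilon)$ (the $\partial\Omega$ term vanishes because $f=x$ there), applies H\"older's inequality over $\Omega\setminus B(\epsilon)$ together with Lemma \ref{recover1} and (\ref{rt2}), and finally lets $\epsilon\rightarrow 0^{+}$. You instead identify $f$ explicitly as the radial projection onto the boundary, $f(r\omega)=R(\omega)\omega$ (a fact the paper only makes explicit later, in the applications section), and reduce the claim to a one-dimensional weighted Hardy inequality on each ray $(0,R(\omega))$ with zero endpoint datum at $R(\omega)$, proved by integration by parts against $r^{n-p-1}$ plus H\"older with the weights split as $r^{n-p-1}$ and $r^{n-1}$ (a split I checked is exact), then integrate over $\omega$. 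Both arguments hinge on the same two consequences of (\ref{fbvp}) --- radial invariance of $u\circ f$ and vanishing of $u-u\circ f$ on $\partial\Omega$ --- and both invoke Lemma \ref{recover1}. What each buys: the paper's divergence-theorem formulation transfers verbatim to the critical case of Theorem \ref{thm2} (only the vector field in the divergence changes), whereas your ray-by-ray argument is more elementary --- no excision and no limiting argument, since the endpoint term at the origin dies trivially from $n>p$ --- and it makes the geometry of $f$ transparent. It is worth noting that your proof in effect contradicts the paper's remark in the introduction that the spherical-coordinates method of \cite{citeballs0} ``can be no longer helpful'' on star domains: it is helpful, once $f$ is recognized as the radial projection. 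One point to tighten: your closing density step should be spelled out --- convergence of the left-hand side under $W^{1,p}$ approximation is not automatic, and the clean way is exactly what you sketch, namely a.e. convergence of traces along a subsequence plus Fatou's lemma, which is in fact more detail than the paper itself provides for the same step.
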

\begin{proof}
The proof is standard. By density \cite{adams}, we may argue assuming $u\in C^{1}\left(\bar{\Omega}\right)$.
Let $F(x):={|u(x)-u(f(x))|^{p}}/{|x|^{p}}$.
Then, for
$x\neq 0$, we have
\begin{eqnarray*}
\nonumber
F(x)&=&
-|u(x)-u(f(x))|^{p}\;\nabla \left(\frac{1}{|x|}\right)
\cdot
\frac{x}{|x|^{p-1}}\\
&=&-\nabla \cdot\left(F(x)\,x\right)+
\left(n-\left(p-1\right)\right)
F(x)+pF^{\frac{p-1}{p}}(x)
\frac{x}{|x|}\cdot \nabla |u(x)-u(f(x))|
\end{eqnarray*}
which we rewrite as
\begin{equation}\label{qq0}
F(x)=\frac{1}{n-p}\nabla \cdot\left(F(x)\,x\right)
-\frac{p}{n-p}F^{\frac{p-1}{p}}(x)
\frac{x}{|x|}\cdot \nabla |u(x)-u(f(x))|.
\end{equation}
Now, choose $\epsilon>0 $ such that $\Omega\supset B(\epsilon)$. Then
\begin{equation}\label{eps1}
I(\epsilon):=\int_{B(\epsilon)}
F(x)\,dx
\lesssim
\|u-u\circ f\|_{L^{\infty}
\left(\Omega\right)}^{p}\,\epsilon^{n-p}.
\end{equation}
The estimate (\ref{eps1}) helps isolate
the singularity so that, if $\nu$ is the outward pointing normal on $\Omega$, then the divergence theorem yields
\begin{equation}\label{qq1}
J(\epsilon):=\int_{\Omega\setminus B(\epsilon)}\,\nabla \cdot\left(F(x)\,x\right)dx=-\int_{|x|=\epsilon} \frac{|u(x)-u(f(x))|^{p}}{|x|^{p-1}}dS(x),
\end{equation}
as assumption (\ref{fbvp}) ensures
\begin{equation*}
\int_{\partial\Omega} F(x)\,x\cdot \nu(x)dS(x)=0.
\end{equation*}
And from (\ref{qq1}) follows the estimate
\begin{equation}\label{eps2}
\left|J(\epsilon)\right|
\lesssim \|u-u\circ f\|_{L^{\infty}
\left(\Omega\right)}^{p}
\epsilon^{n-p}.
\end{equation}
In addition, applying H\"{o}lder's inequality
\begin{align}
\nonumber
&\hspace{-1 cm}|K(\epsilon)|:=
\left|\int_{\Omega\setminus B(\epsilon)}F^{\frac{p-1}{p}}(x)
\frac{x}{|x|}\cdot
\nabla |u(x)-u(f(x))|\,dx\right|\\
\label{qq2} &\hspace{-0.45 cm}\leq
\left(\int_{\Omega\setminus B(\epsilon)}F(x)
\,dx\right)^{1-\frac{1}{p}}
\left(\int_{\Omega\setminus B(\epsilon)}
 \left|\frac{x}{|x|}\cdot\nabla
 \left|u(x)-u(f(x))\right|\right|^{p}
 \,dx\right)^{\frac{1}{p}}.
\end{align}
But Lemma 2 affirms the pointwise estimate
\begin{equation}\label{rt1}
\left|x \cdot\nabla \left|u(x)-u(f(x))\right|\right|
\leq \left|x \cdot\nabla \left(u(x)-u(f(x))\right)\right|,\qquad x\in \Omega_{0}.
\end{equation}
Also, from (\ref{fbvp}) follows
\begin{equation}\label{rt2}
x \cdot\nabla u(f(x))=
\left(\nabla u\right)(f(x))\cdot
\left(x\cdot\nabla\right)f=0,\qquad x\in \Omega_{0}.
\end{equation}
Returning with (\ref{rt1}) and (\ref{rt2})
to (\ref{qq2}) implies
\begin{equation}\label{qq3}
|K(\epsilon)|\leq
\left(\int_{\Omega\setminus B(\epsilon)}F(x)\,dx\right)^{1-\frac{1}{p}}
\left(\int_{\Omega\setminus B(\epsilon)} \left|\frac{x}{|x|}\cdot\nabla u\right|^{p}\,dx\right)^{\frac{1}{p}}.
\end{equation}
Integrating (\ref{qq0}) over
$\Omega$ we get
\begin{equation}\label{finp}
\int_{\Omega}F(x)\,dx
=I(\epsilon)+
\frac{1}{n-p}J(\epsilon)-\frac{p}{n-p}K(\epsilon).
\end{equation}
Finally, by (\ref{eps1}), $\lim_{\epsilon\rightarrow 0^{+}}I(\epsilon)=0$. Similarly
$\lim_{\epsilon\rightarrow 0^{+}}J(\epsilon)=0$
from (\ref{eps2}). And since $|\nabla u|^{p}\in L^{1}(\Omega)$, then, using the dominated convergence theorem in (\ref{qq3}), we deduce (\ref{mainineq1})
from (\ref{finp}).
\end{proof}
\begin{rem}
Star-shapedness of the domain $\Omega$ with respect to the origin is necessary
for a nontrivial vector field $f$ to satisfy \ref{fbvp}.
\end{rem}
\begin{thm}\label{thm2}
Consider $\,\Omega$ and $f$ of Theorem \ref{thm1}.
Let $M:=\sup_{\Omega}{|x|}$ and assume $u\in W^{1,n}(\Omega)$
with $n\geq 2$. Then
\begin{eqnarray}\label{mainineq2}
\int_{\Omega}\frac{|u-u\circ f|^{n}}{|x|^{n}
\left(\log{\frac{M}{|x|}}\right)^{n}}\,dx\leq
\left(\frac{n}{n-1}\right)^{n}
\int_{\Omega} \left|\frac{x}{|x|}\cdot\nabla u\right|^{n} dx.
\end{eqnarray}
\end{thm}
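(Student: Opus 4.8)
The strategy is to transcribe the proof of Theorem \ref{thm1} almost verbatim, merely replacing the homogeneous weight $|x|^{-p}$ with the critical weight $|x|^{-n}\bigl(\log(M/|x|)\bigr)^{-n}$ and the exponent $p$ by $n$. By density I reduce to $u\in C^{1}(\bar\Omega)$, write $w:=|u-u\circ f|$, and set
\[
F(x):=\frac{w(x)^{n}}{|x|^{n}\bigl(\log\frac{M}{|x|}\bigr)^{n}}.
\]
The elementary computation $\nabla\cdot(|x|^{-p}x)=(n-p)|x|^{-p}$ underlying (\ref{qq0}) is now replaced by the radial identity
\[
\nabla\cdot\!\left(\frac{x}{|x|^{n}\bigl(\log\frac{M}{|x|}\bigr)^{n-1}}\right)=\frac{n-1}{|x|^{n}\bigl(\log\frac{M}{|x|}\bigr)^{n}},
\]
which follows from $\tfrac{d}{dr}\log(M/r)=-1/r$ in polar coordinates. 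Multiplying by $w^{n}$ and expanding with the product rule, exactly as in the passage leading to (\ref{qq0}), I obtain on $\Omega_{0}$ the pointwise identity
\[
F=\frac{1}{n-1}\,\nabla\cdot\!\left(\frac{w^{n}\,x}{|x|^{n}\bigl(\log\frac{M}{|x|}\bigr)^{n-1}}\right)-\frac{n}{n-1}\,F^{\frac{n-1}{n}}\,\frac{x}{|x|}\cdot\nabla w,
\]
the precise analogue of (\ref{qq0}) with $n-p$ superseded by $n-1$.

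From here every step runs parallel to Theorem \ref{thm1}. I excise a ball $B(\epsilon)$ about the origin; since $\int_{B(\epsilon)}|x|^{-n}(\log(M/|x|))^{-n}\,dx\lesssim(\log(M/\epsilon))^{-(n-1)}$, the bulk term $I(\epsilon):=\int_{B(\epsilon)}F\,dx$ is $O\bigl(\|w\|_{L^{\infty}}^{n}(\log(M/\epsilon))^{-(n-1)}\bigr)\to0$. Applying the divergence theorem to the first term over $\Omega\setminus B(\epsilon)$ produces a flux over $\partial\Omega$ that vanishes because $f=x$ on $\partial\Omega$ forces $w\equiv0$ there, plus an inner-sphere term $J(\epsilon)$ whose integrand is $O\bigl(\epsilon^{-(n-1)}(\log(M/\epsilon))^{-(n-1)}\bigr)$ against surface measure of order $\epsilon^{n-1}$, so $J(\epsilon)=O((\log(M/\epsilon))^{-(n-1)})\to0$. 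For the remaining term I use H\"older's inequality as in (\ref{qq2}), then Lemma \ref{recover1} to pass from $\nabla w=\nabla|u-u\circ f|$ to $\nabla(u-u\circ f)$, and the boundary-value identity (\ref{rt2}), $x\cdot\nabla(u\circ f)=0$, to replace $\tfrac{x}{|x|}\cdot\nabla w$ by $\tfrac{x}{|x|}\cdot\nabla u$. Integrating over $\Omega$, letting $\epsilon\to0^{+}$, and invoking dominated convergence gives
\[
\int_{\Omega}F\,dx\le\frac{n}{n-1}\left(\int_{\Omega}F\,dx\right)^{\frac{n-1}{n}}\left(\int_{\Omega}\Bigl|\tfrac{x}{|x|}\cdot\nabla u\Bigr|^{n}dx\right)^{\frac1n},
\]
and dividing by $(\int_{\Omega}F)^{(n-1)/n}$ and raising to the power $n$ yields (\ref{mainineq2}).

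The one feature with no counterpart in Theorem \ref{thm1} is that the weight now develops a second singularity, $\log(M/|x|)\to0$ as $|x|\to M$, i.e. near the points of $\partial\Omega$ farthest from the origin; there both the finiteness of $\int_{\Omega}F$ and the vanishing of the $\partial\Omega$-flux must be argued rather than merely observed. I expect this boundary analysis, not the algebra above, to be the main obstacle. Writing $\partial\Omega$ in polar form $r=\rho(\omega)$ with $M=\max_{\omega}\rho(\omega)$, a $C^{1}$ maximizer $\omega_{0}$ is a critical point of $\rho$, while for $r$ near $M$ one has $\log(M/r)\approx(M-r)/M$; since $w\in C^{1}(\bar\Omega)$ vanishes on $\partial\Omega$ one also has $w(x)\lesssim\rho(\omega)-r\le M-r$. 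Combining these bounds gives $w^{n}(\log(M/|x|))^{1-n}\lesssim M^{n-1}(M-r)\to0$, which simultaneously kills the flux integrand at $|x|=M$ and keeps $F$ bounded near that part of the boundary, so that the truncated integrals are finite and the limiting procedure is legitimate.
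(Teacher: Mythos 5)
Your proposal follows the paper's own proof essentially step for step: the same divergence identity $\nabla\cdot\bigl(x\,|x|^{-n}(\log(M/|x|))^{-(n-1)}\bigr)=(n-1)|x|^{-n}(\log(M/|x|))^{-n}$ expanded by the product rule, the same excision of $B(\epsilon)$ with $O\bigl((\log(M/\epsilon))^{-(n-1)}\bigr)$ estimates for the bulk and inner-sphere terms, the divergence theorem with vanishing flux on $\partial\Omega$, then H\"older's inequality combined with Lemma \ref{recover1} and the identity (\ref{rt2}), and finally the limit $\epsilon\to 0^{+}$. Your closing analysis of the $0\cdot\infty$ indeterminacy at the boundary points where $|x|=M$ (via $w\lesssim M-r$ along rays, which is legitimate by star-shapedness, together with $\log(M/r)\approx(M-r)/M$) is a correct and worthwhile justification of a point the paper's proof passes over silently when asserting (\ref{cr1})--(\ref{cr3}).
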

\begin{proof}
Invoking the density argument \cite{adams}, we may take
$u\in C^{1}\left(\bar{\Omega}\right)$. Since, for
$x\in\Omega_{0}$,
\begin{eqnarray*}
{(n-1)}\frac{|u(x)-u(f(x))|^{n}}{|x|^{n}
\left(\log{\frac{M}{|x|}}\right)^{n}}&=&
|u(x)-u(f(x))|^{n}
\nabla\cdot \left(
\frac{x}{|x|^{n}
\left(\log{\frac{M}{|x|}}\right)^{n-1}}
\right)\\
&=&\nabla\cdot \left(
\frac{|u(x)-u(f(x))|^{n}}{|x|^{n}
\left(\log{\frac{M}{|x|}}\right)^{n-1}}
\,x\right)+\\
&&-n\frac{|u(x)-u(f(x))|^{n-1}}{|x|^{n-1}
\left(\log{\frac{M}{|x|}}\right)^{n-1}}
\frac{x}{|x|}\cdot\nabla|u(x)-u(f(x))|.
\end{eqnarray*}
The rest resembles the proof of Theorem \ref{thm1}
once the components (\ref{cr1})-(\ref{cr3}) below are recognized:\\
\begin{equation}\label{cr1}
\int_{B(\epsilon)}\frac{|u(x)-u(f(x))|^{n}}{|x|^{n}
\left(\log{\frac{M}{|x|}}\right)^{n}}dx
\lesssim \frac{\|u-u\circ f\|_{L^{\infty}
\left(\Omega\right)}^{n}}{
\left(\log{\frac{M}{\epsilon}}\right)^{n-1}}
\rightarrow 0,\quad\text{as}\;\;\epsilon\rightarrow 0^{+}.
\end{equation}
\begin{equation}\label{cr2}
\int_{\partial\Omega}
\frac{|u(x)-u(f(x))|^{n}}{|x|^{n}
\left(\log{\frac{M}{|x|}}\right)^{n-1}}
\left(x\cdot \nu\right)dx=0,
\end{equation}
\begin{align}
\nonumber&-\int_{\Omega\setminus B(\epsilon)}
\nabla\cdot\left(
\frac{|u(x)-u(f(x))|^{n}}{|x|^{n}
\left(\log{\frac{M}{|x|}}\right)^{n-1}}
\,x\right)dx\\
&\label{cr3} =\int_{|x|=\epsilon}
\frac{|u(x)-u(f(x))|^{n}}{|x|^{n-1}
\left(\log{\frac{M}{|x|}}\right)^{n-1}}dS(x)
\lesssim \frac{\|u-u\circ f\|_{L^{\infty}
\left(\Omega\right)}^{n}}{
\left(\log{\frac{M}{\epsilon}}\right)^{n-1}}
\rightarrow 0,\quad\text{as}\;\;\epsilon\rightarrow 0^{+},
\end{align}
an implication of the divergence theorem.
\end{proof}
\section{Applications}\label{apps}
\subsection*{1- On ellipsoids}
Let $a:=(a_{i})\in \mathbb{R}_{+}^{n}$ and let
$|.|_{a}$ denote the norm $|x|_{a}:=\left(\sum_{i=1}^{n}
{a_{i}^{2} x_{i}^{2}}
\right)^{\frac{1}{2}}$, $x=(x_{i})\in \mathbb{R}^{n}$.
Consider the open ellipsoid
$E_{a}:=\left\{x:|x|^2_{a}< 1\right\}$.
The essentially bounded field $f_{a}(x):=x/|x|_{a}$
is smooth on $E_{a}\setminus\{0\}$ and satisfies
(\ref{fbvp}) with $\Omega=E_{a}$. Therefore,
if $u\in W^{1,p}(E_{a})$, $1\leq p<n$, $n\geq 3$, then
\begin{equation*}
\int_{E_{a}}
\frac{|u-u\circ f_{a}|^{p}}{|x|^{p}}\,dx
\leq \left(\frac{p}{n-p}\right)^{p}
\int_{E_{a}}\left|\frac{x}{|x|}
\cdot \nabla u \right|^{p}
\,dx
\end{equation*}
by Theorem \ref{thm1}. Moreover,
applying Theorem \ref{thm2},
if $u \in W^{1,n}(E_{a})$ with
$n\geq 2$ then
\begin{equation*}
\int_{E_{a}}\frac{|u-u\circ f_{a}|^{n}}
{|x|^{n}
\left(\log{\frac{M_{a}}{|x|}}\right)^{n}}\,dx\leq
\left(\frac{n}{n-1}\right)^{n}
\int_{E_{a}} \left|\frac{x}{|x|}
\cdot\nabla u\right|^{n} dx
\end{equation*}
where $ M_{a}:=\sup_{x\in E_{a}}{|x|}=
1/\min_{i}{\,a_{i}}$.
\subsection*{2- On star-shaped domains}
\indent Let the $C^{1}$ domain $\mathcal{C}$
be such that $0\in \mathcal{C}$ and $\mathcal{C}$
is star-shaped w.r.t. $0$.
Suppose $\partial\mathcal{C}$
has the representation
 $r = r(\omega)$ in spherical coordinates.
Then $\mathcal{C}$ and
\begin{equation*}
f(x):=r(\frac{x}{|x|})\frac{x}{|x|}
\end{equation*}
fulfill all prerequisites of both Theorems \ref{thm1}
and \ref{thm2}.
\section{Sharpness and maximizers}
Back to Theorems \ref{thm1} and \ref{thm2}.
If $\,u\in W^{1,p}(\Omega)\cap C(\bar{\Omega})\,$
and $f$ is such that $u\circ f \in W^{1,p}(\Omega)$, then $\,u-u\circ f\in W_{0}^{1,p}(\Omega)$. Hence,
by (\ref{rt2}), the sharpness of the inequalities (\ref{ineq0}) and
(\ref{ineqc}) then assure
the optimality of the constants in (\ref{mainineq1})
and (\ref{mainineq2}), respectively.
Equality in (\ref{mainineq1})
is clear for constant $u$, for which both sides vanish.
The same applies to (\ref{mainineq2}).
In fact if $u$ is radially invariant in $\Omega$
then $u(x)=u(f(x))$ and $x.\nabla u=0$, in which case
both sides of (\ref{mainineq1}) and (\ref{mainineq2})
are zero.\\
\indent  Let us investigate all
candidates for radially varying maximizers
of (\ref{mainineq1}). As intuited from Proposition \ref{cntbtnd},
the existence of a maximizer
in $W^{1,p}(\Omega)$ depends on $f$ and
the geometry of $\Omega$. Now, equality in
(\ref{mainineq1}) requires equality in
(\ref{qq2}) which necessitates the existence of
a $\lambda>0$ such that
\begin{equation}\label{Feq1}
F^{\frac{1}{p}}(x)=\lambda \left|\frac{x}{|x|}\cdot\nabla \left|u(x)-u(f(x))\right|\right|
=\lambda\left|\frac{x}{|x|}\cdot\nabla
u(x)\right|,\quad p>1,
\end{equation}
for almost every $x\in \Gamma$ with
$\Gamma:=\left\{x\in
\Omega_{0}:\, u(x)-u(f(x))\neq 0\right\}$.
Comparing (\ref{Feq1})
against (\ref{mainineq1}) with an equality implies
\begin{equation*}
 \lambda=\frac{p}{n-p},
\end{equation*}
and (\ref{Feq1}) becomes
\begin{equation}\label{pgtr1}
x\cdot\nabla w(x)=\pm \frac{n-p}{p} w(x),\quad
w(x)=u(x)-u(f(x)),\quad
\text{a.e.}\;\Gamma
\end{equation}
or equivalently
\begin{equation*}
{x}\cdot \nabla
\left(|x|^{\mp\frac{n-p}{p}}w(x)
\right)=0\quad
\text{a.e.}\;\Gamma.
\end{equation*}
Thus a maximizer of (\ref{mainineq1}), if exists,
has to satisfy
\begin{equation*}
{u}(x)={u}(f(x))+|x|^{\pm\frac{n-p}{p}}
\psi\left(x\right)
\end{equation*}
for some $\psi$ that solves
\begin{equation}\label{psradinv}
x\cdot\nabla\psi=0\;\;\text{\big(equivalently}\;\;
\frac{\partial}{\partial |x|}\psi=0\big)\quad \text{on}
\;\; \,\Gamma.
\end{equation}
By this radial invariance of $\psi$ in $\Omega$,
we can write $\psi(x)=\psi({x}/{|x|})$, after extending
it radially, wherever necessary, to the unit ball.
Moreover, with a sufficiently small $\eta>0$,
one can fit $B(\eta)$ inside $\Omega$.
So if ${u}(x)={u}(f(x))+|x|^{-\frac{n-p}{p}}
\psi\left(x\right)$ then
both sides of (\ref{mainineq1}) dominate
\begin{equation*}
\int_{B(\eta)}\frac{|\psi(x)|^{n}}{
|x|^{n}}\,dx=
\int_{\mathbb{S}^{n-1}}|\psi(\omega)|^{n}d\omega
\int_{0}^{\eta}\frac{1}{r}\,dr =
+\infty
\end{equation*}
unless $\psi=0$. Therefore, for $u$ to maximize
(\ref{mainineq1}), it must satisfy
\begin{equation}\label{umax}
{u}(x)={u}(f(x))+|x|^{\frac{n-p}{p}}
\psi(x)\quad \text{a.e.}\;\Omega,
\end{equation}
assuming $u-u\circ f\neq 0$ a.e. $\Omega$.\\
\indent Recall that (\ref{mainineq1})
lacks nontrivial maximizers when
$u-u\circ f\in W^{1,p}_{0}(\Omega)$.
To avoid that, take an
$\psi\in C\left(\bar{\Omega}_0\right)$
that satisfies (\ref{psradinv}).
If $|x|^{\frac{n-p}{p}}
\psi \in W^{1,p}(\Omega)$ then
$u-u\circ f\in W^{1,p}(\Omega)\cap
C\left(\bar{\Omega}_0\right)$. However
$ u(x)-u(f(x))\neq 0$ for every $
x\in \partial\Omega$, for otherwise
$\psi$ vanishes identically on
$\partial\Omega$ which, by (\ref{psradinv}),
implies $\psi$ vanishes on $\bar{\Omega}_{0}$.
So assume $\psi\in C\left(\bar{\Omega}_0\right)$
henceforth.\\
\indent  A  nontrivial maximizer of (\ref{mainineq1}) cannot be in
$ C_{\text{rad}}(\bar{\Omega}_0)$,
the space of continuous in the radial direction functions
on $\bar{\Omega}_{0}$. Indeed,
since $f\in C(\bar{\Omega}_{0})$
and $f(x)=x$ on $\partial\Omega$, then for a function
$u\in C_{\text{rad}}(\bar{\Omega}_0)$ that
verifies (\ref{umax}), we would have $u(x)-u(f(x))=0$
on $\partial \Omega$. Let
\begin{equation*}
\phi(x):=\left\{
  \begin{array}{ll}
\vspace{0.2 cm} |f(x)|^{\frac{n-p}{p}}\psi(x),
& \hbox{in $\Omega_{0}$;} \\
    0,& \hbox{on $\partial\Omega$.}
  \end{array}
\right.
\end{equation*}
Then $\phi\in C\left(\Omega_{0}\right)$, and is
evidently radially invariant in $\Omega_{0}$.
Since $f\neq 0$  in $\Omega_{0}$,
then $\phi\notin C_{\text{rad}}(\bar{\Omega}_0)$.
Define
\begin{equation}\label{fin1}
{\xi}(x):=|x|^{\frac{n-p}{p}}
\psi(x)+\phi(x),\quad x\in \bar{\Omega}_0.
\end{equation}
Notice that either $\xi\notin C_{\text{rad}}(\bar{\Omega}_0)$, or else
$\xi$ is identically zero. Since
\begin{equation*}
\psi(f(x))=\psi(x),\;\;\phi(f(x))=0
\end{equation*}
for every $x\in\Omega_{0}$, then
\begin{equation*}
{\xi}(f(x))=|f(x)|^{\frac{n-p}{p}}
\psi(x)=\phi(x)\quad \text{in}\;\,\Omega_{0}
\end{equation*}
and $\xi$ satisfies (\ref{umax}). (See Figure A).
\begin{center}
\begin{tikzpicture}[scale=0.5,
use Hobby shortcut,closed=true]
\draw [densely dotted,thick,black]
 (-3.5,0.5) .. (-3,2.5) .. 
(-1,3.5).. (1.5,3).. (4,3.5).. 
(5,2.5).. (5,0.5) ..(2.5,-2).. 
(0,-3.5).. (-3,-2).. (-3.5,0.5);
\draw[thick,black] (0.0, 0)-- (5,2.5) node[black,right] {$p$};
\draw (2,1) node[black,below] {$x$};
\draw (2,1) node[black,thick] {\tiny{-}};
\draw[thick,black] (0.0,0.1) node[black,below] {$O$};
\draw[thick,black] (0.0, -0.5) node[black,below]
 {\tiny{(The origin)}};
\draw[thick,black] (3,-2) node[black,right]
 {$\Omega$};
\draw (1.5,-4.4) node[black,thick]
 {Figure A: $f(x)=f(p)=p$, $\,\xi(f(x))=\xi(p)=
|p|^{\frac{n-p}{p}}\psi(p)$.};
 \end{tikzpicture}
\end{center}
Thus, (\ref{fin1}) provides a maximizer of
(\ref{mainineq1}) for all $1\leq p<n$
as long as $f$ and $\psi$ are such that
$\psi\in L^{\infty}(\Omega)$,
$|x|^{\frac{n-p}{p}}\psi$, $|f|^{\frac{n-p}{p}}\psi$
are in $W^{1,p}(\Omega)$. For these maximizers,
both sides of (\ref{mainineq1}) equal
\begin{equation*}
\int_{\Omega}\frac{{|\psi|^{p}}}{|x|^{2p-n}}
\,dx\leq \|\psi\|_{L^{\infty}(\Omega)}^{p}
\int_{\Omega}\frac{1}{|x|^{2p-n}}
\,dx<\infty
\end{equation*}
for all $1\leq p< n$. Interestingly,
since $\xi \notin C_{\text{rad}}(\bar{\Omega}_{0})$
then $\xi-\xi\circ f$ cannot be a $W^{1,1}_{0}(\Omega)$
function.
This is because
$\xi-\xi\circ f \in C(\Omega_{0})
\setminus C(\bar{\Omega}_{0})$ and,
although
$\xi(x)-\xi(f(x))=0$, for every $x\in \partial \Omega$,
its trace $|x|^{\frac{n-p}{p}}\psi \neq 0$
on $\partial \Omega$.
See Example \ref{example1}.\\
\indent
Iterating this approach,
maximizers of (\ref{mainineq2}), if exist,
verifiably take the form
\begin{equation}\label{umaxc}
{u}(x)={u}(f(x))+{\psi(x)}
{\left(\log{\frac{M}{|x|}}\right)^{-\frac{n-1}{n}}}\quad \text{a.e.}\;\Omega
\end{equation}
with an $\psi$ that satisfies (\ref{psradinv}),
and will be momentarily further determined.
Observe that we excluded the functions
${u}(x)={u}(f(x))+{\psi(x)}
{\left(\log{\frac{M}{|x|}}\right)^{\frac{n-1}{n}}}$
with which both sides in (\ref{mainineq2}) diverge.
Indeed, for $0<\eta<M$ small enough,
both sides of (\ref{mainineq2}) dominate
\begin{equation*}
\int_{B(\eta)}\frac{|\psi(x)|^{n}}{
|x|^{n}\log{\frac{M}{|x|}}}=
\int_{\mathbb{S}^{n-1}}|\psi(\omega)|^{n}
\int_{0}^{\eta}\frac{1}
{r\log{\frac{M}{r}}}\,dr d\omega=
+\infty.
\end{equation*}
With $u$ in (\ref{umaxc}), both sides of (\ref{mainineq2})
equal
\begin{eqnarray*}
I_{\Omega}:=\int_{\Omega}\frac{|\psi|^{n}}
{|x|^{n}
\left(\log{\frac{M}{|x|}}\right)^{2n-1}}\,dx.
\end{eqnarray*}
Whether $I_{\Omega}$ converges is a little tricky.
Let's find $\psi$ that makes $I_{\Omega}$ converge.
Such $\psi$ undoubtedly depends on $\partial\Omega$
where the logarithmic singularity lies.
The radial invariance of $\psi$, and the radial symmetry of its factor
suggest writing $I_{\Omega}$
in spherical coordinates.
So, let $\partial \Omega$
be given by $r=r(\omega)$.
Since $\Omega$ is $C^{1}$
then $r(\omega)\in
C^{1}(\mathbb{S}^{n-1})$.
And since $0\in \Omega$ then
there exists $r_{0,\Omega}>0$
such that $r(\omega)\geq r_{0,\Omega}$
on $\mathbb{S}^{n-1}$. Therefore
\begin{equation*}
\hspace{-0.5 cm}
I_{\Omega}=\int_{\mathbb{S}^{n-1}}
\int_{0}^{r(\omega)}\frac{|\psi(\omega)|^{n}}
{r{\left(\log{\frac{M}{r(\omega)}}\right)^{2n-1}}}\,dr d\omega=
\frac{1}{2n-2}
\int_{\mathbb{S}^{n-1}}\frac{|\psi(\omega)|^{n}}
{\left(\log{\frac{M}{r(\omega)}}\right)^{2n-2}}\,d\omega.
\end{equation*}
Let $\psi \in C(\bar{\Omega}_{0})$.
If $\Lambda:=\{x\in \partial\Omega:|x|=M\}$ has
positive Lebesgue surface measure, $S(\Lambda)$, and $\psi\neq 0$ on $\Lambda$,
then $I_{\Omega}$ is ill-defined. Suppose that
$\Lambda$ is, in addition, simply connected. Then,
for $I_{\Omega}$ to be well-defined,
$\psi$ must vanish in the cone with apex at $0$
and base on $\Lambda$.
Obviously $\Lambda=\partial\Omega$ iff
$\Omega=B(M)$. Thus, the inequality (\ref{mainineq2})
on balls admits no maximizers. \\
\indent Assume that $S(\Lambda)=0$.
We need $|\psi|^n$ to decrease
at least as fast as the now radially invariant
logarithmic factor
$\left(\log{\left({M}/{r(\omega)}\right)}
\right)^{(2n-2)}$
when $r(\omega)\rightarrow M$.
Luckily, if $g(t):=(1-t)^{n\alpha}/\left(\log{(1/t)}
\right)^{2n-2}$, with $\alpha\geq (2n-2)/n$, $0<t<1$,
then $g\in C\left( ]0,1[ \right)\cap
L^{\infty}\left([b,1]\right)$, for any fixed $b>0$.
Also if $h(x):=r(x)/M$, $x\in \mathbb{S}^{n-1}$,
then $h\in C\left(\mathbb{S}^{n-1}\right)$,
and consequently $g\circ h \in C(\mathbb{S}^{n-1})$.
This urges the choice
$\psi(x)= \left(M-r(x/|x|)\right)^{\alpha}$,
$x\in\Omega_{0}$, for which $I_{\Omega}$ converges.
Suppose
\begin{equation}\label{uc}
{\eta}(x):=
\left\{
  \begin{array}{ll}
\vspace{0.25 cm}{\left(M-r
\left(\dfrac{x}{|x|}\right)\right)^{\alpha}}
{\left(\log{\dfrac{M}{|x|}}\right)^{-\frac{n-1}{n}}},
& \hbox{in $\Omega_{0}$;} \\
    0, & \hbox{on $\partial\Omega$.}
  \end{array}\right.
\end{equation}
Then $\eta(f(x))=0$ for every $x\in\bar{\Omega}_{0}$
and we get
\begin{equation*}
\eta(x)-\eta(f(x))=\left(M-r
\left(\frac{x}{|x|}\right)\right)^{\alpha}
{\left(\log{\frac{M}{|x|}}\right)^{-\frac{n-1}{n}}},
\quad x\in \Omega_0.
\end{equation*}
Hence, if $\partial \Omega$ has the representation
$r=r(x/|x|)$ with $r(x/|x|)<M$, $S-$a.e
on $\partial\Omega$, then $\eta$ in
(\ref{uc}) is a maximizer of (\ref{mainineq2})
 for all $n\geq2$,
provided we choose $\alpha\geq (2n-2)/n$ large enough
that $\eta \in W^{1,n}(\Omega)$. Observe here that
$\eta-\eta\circ f\in C(\Omega_{0})\setminus
C_{\text{rad}}(\bar{\Omega}_{0})$, with its trace
$ \left(M-r \left({x}/{|x|}\right)\right)^{\alpha}
{\left(\log{{M}/{|x|}}\right)^{-\frac{n-1}{n}}}>0$,
$S-$a.e $\partial\Omega$. Thus
 $\eta-\eta\circ f\notin W^{1,1}_{0}(\Omega)$
where (\ref{mainineq2}) has no maximizers.
See Example \ref{example2}.
\section{Examples}
\begin{exm}\label{example1}
Let $n\geq 3$, and define on the ellipsoid
$E_{a}$ in Section \ref{apps} the functions
$\psi(x)=x_{1}/|x|_{a}$ and
\begin{equation*}
\phi(x):=\left\{
  \begin{array}{ll}
    |f_{a}(x)|^{\frac{n-p}{p}}\psi(x)=
\left({|x|}/{|x|_{a}}\right)^{\frac{n-p}{p}}\psi(x),
& \hbox{in $E_{a}\setminus\{0\}$;}\\
    0,& \hbox{on $\partial E_{a}$.}
  \end{array}
\right.
\end{equation*}
Since $\phi,\psi\in L^{\infty}(E_{a})$ and
are both weakly differentiable, and since
\begin{equation*}
\max\left\{\left|\nabla \frac{|x|}{|x|_{a}}\right|,
\left|\nabla \frac{x_{1}}{|x|_{a}}\right|,
\left|\nabla |x|^{\frac{n-p}{p}}\right|
\right\} \lesssim_{a}
\frac{1}{|x|},\quad x\neq 0,
\end{equation*}
then $
\xi(x)=|x|^{\frac{n-p}{p}}
\psi+\phi$ is in $W^{1,p}(E_{a})$ for all $1\leq p<n$.
We note that $\psi\in
C^{\infty}(\bar{E_{a}}\setminus\{0\})$ and
$\xi\notin C_{\text{rad}}(\bar{E_{a}}\setminus\{0\})$.
We also find
\begin{equation*}
\xi(f_{a}(x))=|f_{a}(x)|^{\frac{n-p}{p}}
\psi(x)=\phi(x), \quad \text{in}\;\,E_{a}\setminus\{0\}.
\end{equation*}
Consequently
\begin{equation*}
\xi(x)-\xi(f_{a}(x))=|x|^{\frac{n-p}{p}}
\psi(x), \quad \text{in}\;\,E_{a}\setminus\{0\}.
\end{equation*}
Moreover
\begin{eqnarray*}
\int_{E_{a}}
\frac{|\xi-\xi\circ f_{a}|^{p}}{|x|^{p}}\,dx
&=&\left(\frac{p}{n-p}\right)^{p}\int_{E_{a}}\left|\frac{x}{|x|}
\cdot \nabla \xi \right|^{p}
\,dx\\\\
&=&\int_{E_{a}}\frac{{|x|^{n-p}}{|\psi|^{p}}}{|x|^{p}}
\,dx\leq \frac{1}{a^{p}_{1}} \int_{E_{a}}\frac{dx}{|x|^{2p-n}}
<\infty
\end{eqnarray*}
for all $1\leq p<n$.
\end{exm}
\begin{exm}\label{example2}
Again, consider the ellipsoid $E_{a}$.
Using spherical coordinates,
$\partial E_{a}$ has the representation
\begin{equation*}
r=r(\omega)={\left|\omega_{a}\right|^{-1}},\quad
\omega_{a}:=\left(a_{1}\omega_{1},...,
a_{n}\omega_{n}\right).
\end{equation*}
Losing no generality, let $a_{1}=\min_{i} a_{i}$,
$a_{2}=\max_{i} a_{i}$ so that $M_{a}=1/a_{1}$ and
\begin{equation*}
1\leq \frac{M_{a}}{r(\omega)}
\leq M_{a} a_{2},\quad \text{for
every}\;\; \omega\in \mathbb{S}^{n-1}.
\end{equation*}
Assume that $a_{1}<a_{i}$ for every $2\leq i\leq n$.
Then $r(\omega)=M_{a}$ precisely at the two points
$p_{\pm}=(\pm {1}/{a_{1}},0,...,0)$.
Fix $\alpha\geq (2n-2)/n$, and let
\begin{equation*}
\psi(x)=\left\{
  \begin{array}{ll}
\vspace{0.2 cm}
\left(M_{a}-r\left(\dfrac{x}{|x|}
\right)\right)^{\alpha},
 & \hbox{in $E_{a}\setminus\{0\}$;} \\
    0, & \hbox{on $\partial E_{a}$.}
  \end{array}
\right.
\end{equation*}
Then $\psi \in C^{\infty}(\bar{E_{a}}
\setminus\{0\})\cap L^{\infty}(E_{a})$.
Computations show that
\begin{equation*}
{\eta}(x)={\psi(x)}
{\left(\log{\frac{M_{a}}{|x|}}\right)^{-\frac{n-1}{n}}}
\end{equation*}
satisfies
\begin{equation*}
\int_{E_{a}}\frac{|\eta-\eta\circ f_{a}|^{n}}
{|x|^{n}
\left(\log{\frac{M_{a}}{|x|}}\right)^{n}}\,dx=
\left(\frac{n}{n-1}\right)^{n}
\int_{E_{a}} \left|\frac{x}{|x|}
\cdot\nabla \eta\right|^{n} dx=I_{E_{a}}
\end{equation*}
where
\begin{equation*}
\hspace{-0.5 cm}
\label{finite1}
I_{E_{a}}=\int_{E_{a}}\frac{|\psi|^{n}}
{|x|^{n}\left(\log{\frac{M_{a}}{|x|}}\right)^{2n-1}}\,dx
=\frac{M^{n}_{a}}{2n-2}
\int_{\mathbb{S}^{n-1}}
\frac{\left(1-\frac{r(\omega)}
{M_{a}}\right)^{n\alpha}}{
\left(\log{\frac{M_{a}}{
r(\omega)}}\right)^{2n-2}}\,d\omega,
\end{equation*}
which converges since
$r(\omega)\geq {a^{-1}_{2}}>0$ uniformly,
and its integrand is the composite of
the continuous bounded functions
$\omega\mapsto r(\omega)/M_{a}=1/
\left(M_{a}|\omega_{a}|\right)$, and
\begin{equation*}
x\mapsto\left\{
     \begin{array}{ll}
       (1-x)^{n\alpha}/
       \left(\log{\left({1}/{x}\right)}\right)^{2n-2},
        & \hbox{$0<x< 1$;} \\
       0, & \hbox{$x= 1$.}
     \end{array}
   \right.
\end{equation*}
Evidently $\eta\in L^{\infty}(E_{a})$ when
$\alpha\geq (n-1)/n$. Let us check
$\nabla \eta\in L^{n}(E_{a})$ for all $n\geq2$.
For all $x\in E_{a}\setminus\{0\}$, we have
\begin{eqnarray*}
\left|\nabla \psi\right|&=&\alpha
\left(M_{a}-r\left(\frac{x}{|x|}\right)\right)^{\alpha-1}
\left|\nabla \frac{|x|}{|x|_{a}}\right|
\lesssim_{a,\alpha}
\frac{\psi^{\frac{\alpha-1}{\alpha}}}{|x|}.
\end{eqnarray*}
Thus
\begin{align*}
\hspace{-1cm}
\int_{E_{a}} |\nabla \eta|^{n} dx&
\lesssim_{a,\alpha,n}
\int_{E_{a}}\frac{\psi^{n}}{|x|^{n}
\left(\log{\frac{M_{a}}{|x|}}\right)^{2n-1}} dx +
\int_{E_{a}} \frac{\psi^{\frac{n(\alpha-1)}{\alpha}}}{|x|^{n}
\left(\log{\frac{M_{a}}{|x|}}\right)^{n-1}} dx\\
&\hspace{-1 cm}=\,\frac{M^{n}_{a}}{2n-2}
\int_{\mathbb{S}^{n-1}}
\frac{\left(1-\frac{r(\omega)}
{M_{a}}\right)^{n\alpha}}{
\left(\log{\frac{M_{a}}{
r(\omega)}}\right)^{2n-2}}\,d\omega+
\frac{M^{\frac{n(\alpha-1)}{\alpha}}_{a}}{n-2}
\int_{\mathbb{S}^{n-1}}
\frac{\left(1-\frac{r(\omega)}
{M_{a}}\right)^{\frac{n(\alpha-1)}{\alpha}}}{
\left(\log{\frac{M_{a}}{
r(\omega)}}\right)^{n-2}}\,d\omega<\infty
\end{align*}
for all $\alpha\geq n/2$. Noteworthily, $\eta
\in C(E_{a})\setminus C_{\text{rad}}(\bar{\Omega}_0)$,
and $\eta-\eta\circ f=\eta$ has positive trace. Hence
$\eta-\eta\circ f\notin W^{1,1}_{0}(E_{a})$.
\end{exm}
\section{Acknowledgement}
The author is grateful to Craig Cowan at the
university of Manitoba for his valuable comments
on the counterexample that proves
Proposition \ref{cntbtnd}.
\section*{References}

\end{document}